\numberwithin{equation}{section}
\title{A description of the Zeta map on Dyck paths area sequences}
\author{Viviane Pons}
\address{CNRS, IRL CRM Montréal -- Université Paris-Saclay, CNRS, Laboratoire Interdisciplinaire des Sciences du Numérique}
\email{viviane.pons@lisn.upsaclay.fr}
\definecolor{darkblue}{rgb}{0,0,0.7} 
\definecolor{green}{RGB}{57,181,74} 
\newcommand{\darkblue}{\color{darkblue}} 
\newcommand{\red}{\color{red}} 
\newcommand{\defn}[1]{\emph{\darkblue #1}} 
\newtheorem{theorem}{Theorem}
\newtheorem{corollary}[theorem]{Corollary}
\newtheorem{proposition}[theorem]{Proposition}
\newtheorem{definition}[theorem]{Definition}
\theoremstyle{definition}
\DeclareMathOperator{\dinv}{dinv}
\DeclareMathOperator{\area}{area}
\DeclareMathOperator{\bounce}{bounce}
\DeclareMathOperator{\ins}{ins}
\DeclareMathOperator{\Maxb}{\mathcal{M}}
\DeclareMathOperator{\Maxa}{\mathcal{M}'}
\DeclareMathOperator{\bounces}{\mathcal{B}}
\newcommand{\tdinv}{\emph{dinv}\xspace}
\newcommand{\tarea}{\emph{area}\xspace}
\newcommand{\tbounce}{\emph{bounce}\xspace}
\begin{document}

\maketitle

\begin{abstract}
We give a simple iterative description of the well known Zeta map on Dyck paths which sends the \emph{dinv,area} statistics to the \emph{area,bounce} statistics. Our description uses Dyck paths area sequences and can be implemented easily.
\end{abstract}

\section*{Introduction}

The $qt$-Catalan symmetry is known to be one of the most intriguing questions in algebraic combinatorics. Rooted in the study of diagonal harmonic polynomials~\cite{qtbook}, it leads to numerous combinatorial interpretations and open questions. One of the main problems can be stated in an elementary combinatorial way but has, to this day, no elementary solution. Dyck paths are a well known family of combinatorial objects counted by the Catalan numbers. The \tarea of a Dyck path is a natural statistic leading to common $q$-enumerations of Catalan objects. The \tdinv is another statistic which appears to be more mysterious but has the particularity of having the same distribution as the \tarea. Moreover, there are algebraic proofs that these two statistics are \emph{symmetric}: the number of Dyck paths with \tarea $q$ and \tdinv $t$ is the same as the number of Dyck paths with \tarea $t$ and \tdinv $q$. There is no combinatorial proof of this symmetry, \emph{i.e.}, there is no bijection on Dyck paths which exchange the two statistics. Besides, this symmetry can be extended to more general settings such as triangular Dyck paths~\cite{PRE_BergTriangular} and is conjectured to be schur-positive.

The purpose of this note is to share an algorithm that I have had for some time and which might be useful to the many mathematicians working around this problem. I got an early interest in this question and of course tried to find the mysterious bijection myself. I failed like many others and instead came up with a bijective map that did not switch the statistics but did send the \tarea{} to the \tdinv{}. As I soon discovered, this turned out to the very well known $\zeta$ map described by Haglund~\cite{qtbook} (page 50) and since then generalized and largely studied~\cite{zeta_2014, SweepZeta}. More precisely, this is~$\zeta^{-1}$. As this was no breakthrough and as I could not make much of it, I never properly wrote it until today. 

However, in recent discussions I had, I discovered that many people with strong interest in the original problem were actually not aware of this natural description of the $\zeta$ map. It has the advantage of being an iterative process: it inserts letters inside the \emph{area sequence}. The image of a Dyck path of size $n$ is obtained through a single insertion on the image of the corresponding Dyck path of size $n-1$. As a consequence, it can be implemented easily and we hereby share the {\tt SageMath} implementation directly derived from this paper~\cite{SAGE_areaseq}.

Even though this description seems very natural, I have not seen it anywhere else. There is already an implementation of the $\zeta$ map in {\tt SageMath}~\cite{SageMath2022} using a recursive approach on the area sequence. I tend to believe that the proof that the {\tt SageMath} implementation is indeed the map described by Haglund is rather close to mine but the only reference found in the {\tt SageMath} manual is the Haglund map and no proper proof of the implementation is offered. Besides, the {\tt SageMath} map is slightly different as it works on a reversed Dyck path and is then not directly the $\zeta$ map. This is why I have decided to properly write and prove the present construction of the $\zeta$ map with no other ambition than to share this  piece of knowledge.

The paper is organized in a very straight forward manner. We first present in Section~\ref{sec:area-sequence} the objects at stake, namely: Dyck paths, area sequences, and the three important statistics of the $\zeta$ map \tarea, \tdinv, and \tbounce. The map itself is found in Section~\ref{subsec:area-to-dinv} along with the proof that it is bijective and sends the \tarea of a Dyck path to the \tdinv. In Section~\ref{subsec:bounce-to-area}, we prove that, like the original $\zeta^{-1}$ map, it sends the \tbounce to the \tarea. Finally, in Section~\ref{subsec:zeta}, we show that our map is indeed $\zeta^{-1}$.

In an external annex~\cite{SAGE_areaseq}, we provide the detailed implementation of the map in {\tt SageMap} and a demo {\tt SageMath Jupyter Notebook} following the same structure as the paper, implementing all examples and providing tests for all results. This demo is hosted on a {\tt Github} repo providing a dynamic link allowing to run the code on a distant server without any installation needed.

\section{Area sequences and statistics}
\label{sec:area-sequence}

\subsection{Area sequence of a Dyck path}

A Dyck path is a path in the grid consisting of north steps and east steps starting at $(0,0)$ and such that the path never goes below the line $y = x$. The size of a Dyck path is the number of north steps. Dyck paths are a very classical family of combinatorial objects. The number of Dyck paths of size $n$ is given by the Catalan numbers 

\begin{equation*}
\frac{1}{n+1} \binom{2n}{n}.
\end{equation*}

The \defn{area sequence} of a Dyck path is a word on $\lbrace 0,1,\dots, n-1 \}$ such that the letter in position $i$ gives the number of full $1 \times 1$ cell in the grid to the right of the $i^{th}$ north step and to the left of the $y = x$ line. In other words, these are the cells on the $i^{th}$ line between the path and the $y=x$ line. In Fig.~\ref{fig:dyck_paths3}, we show all Dyck words of size~$3$ with their area sequences. An example of size $17$ is given on Fig.~\ref{fig:dyck_paths_example}.

\begin{figure}[ht]
\scalebox{.7}{\begin{tabular}{ccccc}
\begin{tikzpicture}
\draw[dotted] (0, 0) grid (3, 3);
\draw[color = gray, line width = 1] (0,0) -- (3,3);
\draw[rounded corners=1, color=black, line width=2] (0,0) -- (0,1) -- (1,1) -- (1,2) -- (2,2) -- (2,3) -- (3,3);
\end{tikzpicture}
&
\begin{tikzpicture}
\draw[dotted] (0, 0) grid (3, 3);
\draw[color = gray, line width = 1] (0,0) -- (3,3);
\draw[rounded corners=1, color=black, line width=2] (0,0) -- (0,1) -- (1,1) -- (1,2) -- (1,3) -- (2,3) -- (3,3);
\end{tikzpicture}
&
\begin{tikzpicture}
\draw[dotted] (0, 0) grid (3, 3);
\draw[color = gray, line width = 1] (0,0) -- (3,3);
\draw[rounded corners=1, color=black, line width=2] (0,0) -- (0,1) -- (0,2) -- (1,2) -- (2,2) -- (2,3) -- (3,3);
\end{tikzpicture}
&
\begin{tikzpicture}
\draw[dotted] (0, 0) grid (3, 3);
\draw[color = gray, line width = 1] (0,0) -- (3,3);
\draw[rounded corners=1, color=black, line width=2] (0,0) -- (0,1) -- (0,2) -- (1,2) -- (1,3) -- (2,3) -- (3,3);
\end{tikzpicture}
&
\begin{tikzpicture}
\draw[dotted] (0, 0) grid (3, 3);
\draw[color = gray, line width = 1] (0,0) -- (3,3);
\draw[rounded corners=1, color=black, line width=2] (0,0) -- (0,1) -- (0,2) -- (0,3) -- (1,3) -- (2,3) -- (3,3);
\end{tikzpicture} \\
$000$ & $001$ & $010$ & $011$ & $012$
\end{tabular}}
\caption{The Dyck paths of size 3 and their area sequences.}
\label{fig:dyck_paths3}
\end{figure}

\begin{figure}[ht]
\scalebox{.3}{\begin{tikzpicture}
\draw[dotted] (0, 0) grid (17, 17);
\draw[color = gray, line width = 1] (0,0) -- (17,17);
\draw[rounded corners=1, color=black, line width=2] (0,0) -- (0,1) -- (0,2) -- (0,3) -- (1,3) -- (2,3) -- (2,4) -- (3,4) -- (3,5) -- (4,5) -- (4,6) -- (4,7) -- (4,8) -- (5,8) -- (5,9) -- (6,9) -- (7,9) -- (8,9) -- (9,9) -- (9,10) -- (9,11) -- (10,11) -- (10,12) -- (11,12) -- (12,12) -- (12,13) -- (12,14) -- (12,15) -- (13,15) -- (13,16) -- (14,16) -- (15,16) -- (15,17) -- (16,17) -- (17,17);
\end{tikzpicture}}

$0 1 2 1 1 1 2 3 3 0 1 1 0 1 2 2 1$
\caption{An example of a Dyck path of size 17 with its area sequence.}
\label{fig:dyck_paths_example}
\end{figure}

The following characterization of area sequences is a well known fact. 

\begin{proposition}
A word $w_1 \dots w_n$ of size $n$ on $\lbrace 0,1,\dots, n-1 \}$ is the area sequence of a Dyck path if and only if $w_1 = 0$ and for all $1 \leq i < n$, we have $0 \leq w_{i+1} \leq w_i + 1$.
\end{proposition}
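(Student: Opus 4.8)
The plan is to pass through the classical encoding of a Dyck path by the abscissas of its north steps. For a Dyck path $D$ of size $n$, let $a_i$ denote the $x$-coordinate of its $i$-th north step. First I would record the three elementary facts: (i) $a_1=0$, since a path starting with an east step would reach $(1,0)$, which lies below $y=x$; (ii) $a_1\le a_2\le\cdots\le a_n$, since between two consecutive north steps the path only moves east; and (iii) $a_i\le i-1$ for every $i$, since the bottom endpoint $(a_i,i-1)$ of the $i$-th north step must lie weakly above $y=x$. Conversely, any integer sequence satisfying $0=a_1\le a_2\le\cdots\le a_n$ and $a_i\le i-1$ comes from a unique Dyck path: put the $i$-th north step at abscissa $a_i$ and insert $a_{i+1}-a_i$ east steps after it for $i<n$, together with $n-a_n$ east steps at the end. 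A telescoping sum shows this uses exactly $n$ east steps, and the rightmost points reached at each height $i$, namely $(a_{i+1},i)$ with $a_{i+1}\le i$ (and $(n,n)$ at the top), never drop below $y=x$.

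Next I would translate between $a_i$ and the area sequence $w_i$. In the horizontal strip $i-1\le y\le i$ the path consists of the vertical segment at $x=a_i$ together with east steps on the two horizontal edges of the strip, so a full $1\times 1$ cell of that strip lies strictly between $D$ and the line $y=x$ exactly when its left edge has abscissa at least $a_i$ and its right edge has abscissa at most $i-1$; hence $w_i=i-1-a_i$, which is nonnegative by (iii). This identity is the crux: under the substitution $a_i=i-1-w_i$, conditions (i), (ii), (iii) read respectively $w_1=0$, $w_{i+1}\le w_i+1$, and $w_i\ge 0$, while the constraint $a_{i+1}\le i$ reads $w_{i+1}\ge 0$.

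Combining the two paragraphs gives both implications at once. If $D$ is a Dyck path then its area sequence is $w_i=i-1-a_i$ and (i)--(iii) force $w_1=0$ and $0\le w_{i+1}\le w_i+1$. Conversely, given any word $w_1\cdots w_n$ with $w_1=0$ and $0\le w_{i+1}\le w_i+1$, an immediate induction gives $w_i\le i-1$ (so the letters automatically lie in $\{0,\ldots,n-1\}$, the alphabet hypothesis being no real restriction), the sequence $a_i:=i-1-w_i$ satisfies $0=a_1\le\cdots\le a_n$ and $a_i\le i-1$, and the Dyck path it encodes has area sequence precisely $w$. I do not anticipate a genuine obstacle; the only care needed is in the coordinate bookkeeping behind $w_i=i-1-a_i$ and in checking that the reconstructed path never crosses the diagonal.
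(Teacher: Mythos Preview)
Your argument is correct and follows essentially the same route as the paper: both encode the Dyck path by the horizontal positions of its north steps (you via the abscissas $a_i$, the paper via the number $e$ of east steps between consecutive north steps, which is just $a_{i+1}-a_i$) and then read off the relation $w_{i+1}=w_i+1-e$. Your write-up is considerably more detailed than the paper's one-line sketch, in particular in spelling out the converse construction, but the underlying idea is the same.
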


The proof is immediate. The letter $w_i$ corresponds to the number of cells to the right of the $i^{th}$ north step $x_i$. If $x_i$ is not the last north step, it is followed by a certain number $e$ of east steps before the next north step. The conditions on the Dyck path impose that $0 \leq e \leq w_i + 1$ and we see that $w_{i+1} = w_i + 1 - e$. Here are the 14 area sequences for Dyck paths of size 4.

\begin{tabular}{ccccccc}
$0000$ & $0001$ & $0010$ & $0011$ & $0012$ & $0100$ & $0101$ \\
$0110$ & $0111$ & $0112$ & $0120$ & $0121$ & $0122$ & $0123$ 
\end{tabular}

The \defn{area} of a Dyck path is the total number of grid cells between the path and the $y = x$ line. In other words, this is the sum of the values of the area sequence. For example, the \tarea{} of the Dyck Path of Fig.~\ref{fig:dyck_paths_example} is $22$.

\subsection{The \tdinv{}}

The \defn{dinv} statistic on Dyck paths appears in the work of Haglund~\cite{qtbook}. There are many equivalent definitions. We directly give a description on the area sequence.

\begin{definition}
Let $w = w_1 \dots w_n$ be an area sequence. For each letter $w_i$, we call $d_i$ the number of letters $w_j$ with $j > i$ and $w_j = w_i$ or $w_j = w_i - 1$. Then, the \defn{dinv} of w is the sum of the $d_i$.
\end{definition}

For example, for $w = 010$, we obtain $d_1 = 1$, $d_2 = 1$ and $d_3 = 0$, and so $\dinv (w) = 2$. In the example of Fig.~\ref{fig:dyck_paths_example}, the $d_i$ are given by $2, 9, 10, 8, 7, 6, 6, 3, 2, 1, 4, 3, 0, 1, 2, 1$ giving a total \tdinv{} of 65.

\subsection{The \tbounce{}}

Another important statistic when working on the~$\zeta$ map is the \defn{bounce} of a Dyck path. The \tbounce{} can be computed by drawing a ``bounce path'' under the Dyck path. The bounce path is a Dyck path which starts at $(0,0)$ and goes up as much as possible by staying under the original Dyck path, then goes straight to the $y=x$ line and ``bounces back'' again as much as possible as drawn on Fig.~\ref{fig:bounce_path_example}. The area sequence of the bounce path is the \emph{bounce sequence} which can be computed directly from the area sequence of the Dyck path.

\begin{definition}
Let $w = w_1 \dots w_n$ be an area sequence, the bounce sequence $b_1 \dots b_n$ of $w$ is given by
\begin{align*}
b_1 &= 0 \\
b_i &= \begin{cases}
b_{i-1} + 1 & \text{if } b_{i-1} + 1 \leq w_i \\
0 & \text{otherwise.}
\end{cases}
\end{align*}

\end{definition}

Fig.~\ref{fig:bounce_path_example} shows an example of Dyck path with its bounce path and bounce sequence in red. The ``bounces'' of the bounce sequence correspond to all the non initial zeros, we write $\bounces(w) = \lbrace i > 1; b_i = 0 \rbrace$. Now the \defn{bounce} statistic is obtained by the summing the \emph{reversed positions} of the bounces, \emph{i.e.}, their distance to the end of the path which is given by $n - i +1$. For example, in Fig.~\ref{fig:bounce_path_example}, we have $b_4 = b_6 = b_{10} = b_{12} = b_{13} = b_{16} = 0$. We sum all their reversed positions and we obtain $\bounce(w) = 14 + 12 + 8 + 6 + 5 + 2 = 47$.

\begin{figure}[ht]
\scalebox{.3}{\begin{tikzpicture}
\draw[dotted] (0, 0) grid (17, 17);
\draw[color = gray, line width = 1] (0,0) -- (17,17);
\draw[rounded corners=1, color=black, line width=2] (0,0) -- (0,1) -- (0,2) -- (0,3) -- (1,3) -- (2,3) -- (2,4) -- (3,4) -- (3,5) -- (4,5) -- (4,6) -- (4,7) -- (4,8) -- (5,8) -- (5,9) -- (6,9) -- (7,9) -- (8,9) -- (9,9) -- (9,10) -- (9,11) -- (10,11) -- (10,12) -- (11,12) -- (12,12) -- (12,13) -- (12,14) -- (12,15) -- (13,15) -- (13,16) -- (14,16) -- (15,16) -- (15,17) -- (16,17) -- (17,17);
\draw[rounded corners=1, color=red, line width=2] (0.2,0) -- (0.2,1) -- (0.2,2) -- (0.2,2.8) -- (1.2,2.8) -- (2.2,2.8) -- (3.2,2.8) -- (3.2,3.8) -- (3.2,4.8) -- (4.2,4.8) -- (5.2,4.8) -- (5.2,5.8) -- (5.2,6.8) -- (5.2,7.8) -- (5.2,8.8) -- (6.2,8.8) -- (7.2,8.8) -- (8.2,8.8) -- (9.2,8.8) -- (9.2,9.8) -- (9.2,10.8) -- (10.2,10.8) -- (11.2,10.8) -- (11.2,11.8) -- (12.2,11.8) -- (12.2,12.8) -- (12.2,13.8) -- (12.2,14.8) -- (13.2,14.8) -- (14.2,14.8) -- (15.2,14.8) -- (15.2,15.8) -- (15.2,16.8) -- (16.2,16.8) -- (17.2,16.8);
\end{tikzpicture}}

     $0 1 2 1 1 1 2 3 3 0 1 1 0 1 2 2 1$ \\
$\red{0 1 2 0 1 0 1 2 3 0 1 0 0 1 2 0 1}$ 
\caption{The bounce path of a Dyck path}
\label{fig:bounce_path_example}
\end{figure}

Note that the external annex~\cite{SAGE_areaseq} provides an implementation of those statistics following the exact definitions of the paper.

\section{The map}

\subsection{Area to dinv}
\label{subsec:area-to-dinv}

In this section, we describe a map which, given an area sequence $w$ with $\sum w = q$, creates a new area sequence $w'$ such that $\dinv(w') = q$. Our algorithm is based on the notion of \defn{insertion}. An \defn{insertion} on an area sequence $w$ at position $i$ is defined as follows

\begin{align}
\ins_0(w) &:= 0 w_1 \dots w_n; \\
\ins_i(w) &:= w_1 \dots w_i (w_i + 1) w_{i+1} \dots w_n 
\end{align}
for $1 \leq i \leq n$. For example, $\ins_0(010) = 0010$, $\ins_1(010) = 0110$, $\ins_2(010) = 0120$, and $\ins_3(010) = 0101$.

It is clear that the result of an insertion on an area sequence is also an area sequence. We are now going to specify a certain list of possible insertions on a given area sequence in order to control the exact \tdinv added by the insertion.

\begin{definition}
\label{def:admissible-insertions}
Let $w$ be an area sequence whose maximum value is~$m$. We write $\Maxb(w) = w^{-1}(m) := \lbrace i; w_i = m \rbrace$, the positions of the letters~$m$ in $w$. We now define $\Maxa(w)$ the set of positions $i$ such that $w_i = m-1$ and for all $j > i$, $w_j < m$. And finally, let us define $i_0(w)$ to be the position of the leftmost letter $m$ in the rightmost block of consecutive $m$ letters. Then the set $\Maxb \cup \Maxa \cup \lbrace i_0 - 1 \rbrace$ are the \defn{admissible insertion positions} of $w$. The \defn{admissible order} on this set is a total order given by the elements of $\Maxb$ in decreasing order followed by the elements of $\Maxa$ in decreasing order followed by $i_0 - 1$. 

If $w$ is the empty word, the admissible insertion positions of $w$ are the set of size $1$ $\lbrace 0 \rbrace$.
\end{definition}

For example, if $w = 0122210122011$, then $\Maxb(w) = \lbrace 3,4,5,9,10 \rbrace$, $\Maxa(w) = \lbrace 12,13 \rbrace$, and $i_0(w) = 9$. The admissible insertion position in admissible order are $10,9,5,4,3,13,12,8$. The annex~\cite{SAGE_areaseq} provides the code to compute more examples.

\begin{proposition}
\label{prop:admissible-insertions}
Let $w$ be an area sequence and $C = c_0, \dots, c_{k} $ its admissible insertion positions taken in admissible order, then 
\begin{equation}
\label{eq:dinv-insertion}
\dinv(\ins_{c_i}(w)) = \dinv(w) + i,
\end{equation}
and the new word $\ins_{c_i}(w)$ has $i+2$ admissible insertion positions. Besides, the inserted value, at position $c_i + 1$, is always of maximal value in $\ins_{c_i}(w)$ and it is the leftmost element of the rightmost block of consecutive maximal values. 
\end{proposition}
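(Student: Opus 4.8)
The plan is to fix an area sequence $w$ with maximum value $m$ and prove the three assertions at once, by a case distinction on the location of the insertion position $c_i$: either $c_i\in\Maxb(w)$, or $c_i\in\Maxa(w)$, or $c_i=i_0(w)-1$. (The empty word is the trivial base case; when $m=0$, so $w=0^n$, one has $i_0-1=0$ and this degenerate situation is folded into the last case.) Before entering the cases, I would record once and for all how a single insertion changes the \tdinv. Writing $w'=\ins_p(w)$ and letting $v$ denote the inserted value --- so $v=w_p+1$ when $p\ge 1$ and $v=0$ when $p=0$ --- a direct count of the changes in the per-letter quantities $d_j$ gives
\begin{equation*}
\dinv(w') \;=\; \dinv(w) \;+\; \#\{\,j\le p : w_j\in\{v,v+1\}\,\} \;+\; \#\{\,j>p : w_j\in\{v,v-1\}\,\},
\end{equation*}
with the convention that the first count is $0$ when $p=0$: each of the first $p$ letters gains one unit exactly when its value is $v$ or $v+1$, the inserted letter contributes the number of later letters of value $v$ or $v-1$, and every letter strictly to the right of the inserted one keeps its old contribution.

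With this formula, each case reduces to a short computation from the definitions of $\Maxb$, $\Maxa$ and $i_0$, together with the (easily checked) fact that $\Maxb(w)$, $\Maxa(w)$ and $\{i_0-1\}$ are pairwise disjoint, so that $w$ has exactly $k+1=|\Maxb(w)|+|\Maxa(w)|+1$ admissible positions. If $c_i\in\Maxb(w)$, then $v=m+1$, the left-hand count vanishes, and the right-hand count is the number of letters $m$ strictly to the right of $c_i$, which by the definition of the admissible order is $i$; moreover the inserted letter is the unique $m+1$ of $w'$, hence vacuously the left end of the rightmost block of maximal values; and $\Maxb(w')$ has a single element, $\Maxa(w')$ consists of the elements of $\Maxb(w)$ to the right of $c_i$, and $i_0(w')-1$ adds one further position, for a total of $1+i+1=i+2$. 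If $c_i\in\Maxa(w)$, then $v=m$, the left-hand count is $|\Maxb(w)|$ and the right-hand count is the number of elements of $\Maxa(w)$ to the right of $c_i$; by the definition of the admissible order these sum to $i$; the inserted $m$ is isolated and lies to the right of every other letter $m$, hence it is the left end of the rightmost maximal block; and recomputing $\Maxb(w')$, $\Maxa(w')$ and $i_0(w')-1$ gives $i+2$ again. Finally if $c_i=i_0-1$, then $w_{i_0-1}=m-1$ (unless $w=0^n$), $v=m$, and the two counts are $|\Maxb(w)|-t$ and $t+|\Maxa(w)|$, where $t$ is the length of the rightmost block of consecutive letters $m$; these sum to $|\Maxb(w)|+|\Maxa(w)|=k=i$; the inserted $m$ extends that rightmost block to the left, so it is its left end; and $\Maxb(w')$, $\Maxa(w')$, $\{i_0(w')-1\}$ have sizes $|\Maxb(w)|+1$, $|\Maxa(w)|$ and $1$, for a total of $k+2$.

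The genuinely delicate point is not the arithmetic but the bookkeeping of how the sets $\Maxb$, $\Maxa$ and the index $i_0$ transform under an insertion, and especially so in the two cases where the maximum value is unchanged: there one must verify that no ``old'' position silently enters or leaves $\Maxa(w')$. Concretely, inserting an $m$ immediately after a position of value $m-1$ must be seen to destroy that position's membership in $\Maxa$ while promoting exactly the $\Maxa$-positions lying further to the right, and to move $i_0$ onto the freshly inserted letter. I would also check the boundary configurations ($c_i=0$, $c_i=n$, $w$ empty, $w=0^n$) to make sure the displayed formula and the count $i+2$ degenerate correctly.
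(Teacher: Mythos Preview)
Your argument is correct and follows essentially the same three-case analysis as the paper's proof (insertion at a position in $\Maxb(w)$, in $\Maxa(w)$, or at $i_0(w)-1$), with the same verifications of the \tdinv{} increment, of the sizes of $\Maxb$, $\Maxa$, $\{i_0-1\}$ in the new word, and of the location of the inserted letter. The one cosmetic difference is that you first isolate the general identity
\[
\dinv(\ins_p(w)) - \dinv(w) \;=\; \#\{\,j\le p : w_j\in\{v,v+1\}\,\} \;+\; \#\{\,j>p : w_j\in\{v,v-1\}\,\},
\]
and then specialize it, whereas the paper computes the increment directly inside each case; this lemma makes the arithmetic slightly cleaner (in particular your introduction of the block length $t$ in the third case makes the cancellation $(|\Maxb|-t)+(t+|\Maxa|)=|\Maxb|+|\Maxa|$ explicit), but the underlying computation is identical.
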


\begin{proof}
If $i < |\Maxb(w)|$, then we have $w_{c_i} = m$. For convenience, we write $\tilde{w} := \ins_{c_i}(w)$. When inserting, we obtain $\tilde{w}_{c_i+1} = m+1$. As $m$ was the maximal value of $w$, there are no other letters equal to $m+1$ in $\tilde{w}$ and because we take the positions of $\Maxb(w)$ in decreasing order, there are $i$ letters equal to $m$  to the right of $\tilde{w}_{c_i}$ so $d_{c_i+1} = i$ in $\tilde{w}$ and the rest of the \tdinv{} is left unchanged. Besides, we have $\Maxb(\tilde{w}) = \lbrace c_i + 1 \rbrace$ and $\Maxa(\tilde{w}) = \lbrace c_{i-1} + 1, c_{i-2} +1, \dots, c_0 +1\rbrace$, \emph{i.e.}, the shifted positions of the letters $m$ in $w$ which were to the right of the position $c_i$, and $i_0(\tilde{w}) = c_i -1$. The number of admissible insertion positions in $\tilde{w}$ is then given by $i + 2$. Finally, the inserted letter is of maximal value and as it is the only letter with such value, it has the desired properties.

If $| \Maxb(w) | \leq i < |\Maxb(w)| + |\Maxa(w)|$, then we have $w_{c_i} = m -1$. When inserting, we obtain $\tilde{w}_{c_i + 1} = m$. Note that in this case, $i = | \Maxb(w) | + j$ with $j \geq 0$. Besides, by definition of the admissible order, $c_i$ is greater than all the elements of $\Maxb(w)$. This means that for all $k \in \Maxb$, $d_k$ is increased by one in $\tilde{w}$ and there are no elements equal to $m$ to the right of $\tilde{w}_{c_i+1}$. On top of that, as we take the values of $\Maxa(w)$ in decreasing order, there are $j$ elements in $\tilde{w}$ with value $m-1$ to the right of $\tilde{w}_{c_i +1}$. So $d_{c_i+1} = j$ and in total, the \tdinv{} is increased by $| \Maxb(w) | + j = i$. Besides, $|\Maxb(\tilde{w})| = |\Maxb(w)| + 1$ whereas $|\Maxa(\tilde{w})| = j$ and so $\tilde{w}$ has $|\Maxb(w)| + 1 + j + 1= i +2$ admissible insertion positions. Finally, the inserted letter is of value $m$, which is still the maximal value in $\tilde{w}$, it is to the right of all the other letters $m$ and forms by itself a block of a single element.

Now, the last case is when the insertion occurs at $i_0 - 1$. Remember that $i_0$ is the leftmost letter $m$ in the rightmost block of consecutive letters $m$ in $w$. It is possible that $i_0 = 1$ is the first letter of $w$. In this case, $w_{i_0}$ as well as  all letters of $w$ are actually equal to $0$, and after the insertion, we obtain $\tilde{w}_{i_0} = \tilde{w}_0 = 0 = m$. Otherwise, as $w_{i_0} = m$, we have $w_{i_0 - 1} \geq m - 1$. By definition of $i_0$, $w_{i_0 - 1}$ cannot be equal to $m$ and because $m$ is maximal among the letters of $w$, we obtain $w_{i_0 - 1} = m - 1$. After the insertion, we have $\tilde{w}_{i_0} = m$. For all $j \in \Maxb(w)$ such that $j < i_0$, the value $d_j$ increases by $1$ whereas $d_{i_0} = |\lbrace j \in \Maxb(w); j > i_0 \rbrace| + \Maxa(w)$ so the \tdinv{} is indeed increased by $|\Maxb(w)| + |\Maxa(w)|$.  Besides, $|\Maxb(\tilde{w})| = |\Maxb(w)| + 1$ whereas $\Maxa(\tilde{w}) = \Maxa(w)$ and so $\tilde{w}$ has $|\Maxb(w)| + |\Maxa(w)| + 2$ admissible insertion positions. Finally, the inserted letter is of maximal value and by definition of $i_0$, it is the leftmost of the rightmost block of consecutive maximal letters. 
\end{proof}

To illustrate this proof, we show all admissible insertions on the area sequence $0122210122011$ on Fig.~\ref{fig:admissible-insertions}. The \tdinv{} of the sequence is $44$. We have marked the admissible insertion positions with $\cdot_i$ where $i$ is the index in the admissible order. You can check that an insertion at $c_i$ corresponds to an increase of $i$ in the \tdinv{} and that the new sequence has $i+2$ insertion positions. 

\begin{figure}[ht]
\begin{tabular}{llc}
insertion position & area sequence & \tdinv{} \\
  & $0~1~2 \cdot_4 2 \cdot_3 2 \cdot_2 1~0~1 \cdot_7 2 \cdot_1 2 \cdot_0 0~1 \cdot_6 1 \cdot_5 $ & 44 \\
$c_0 = 10$ & $0~1~2~2~2~1~0~1~2~2 \cdot_1 {\red 3} \cdot_0 0~1~1$ & 44 \\
$c_1 = 9$  & $0~1~2~2~2~1~0~1~2 \cdot_2 {\red 3} \cdot_0 2 \cdot_1 0~1~1$ & 45 \\
$c_2 = 5$  & $0~1~2~2~2 \cdot_3 {\red 3} \cdot_0 1~0~1~2 \cdot_2 2 \cdot_1 $ & 46 \\
$c_3 = 4$  & $0~1~2~2 \cdot_4 {\red 3} \cdot_0 2 \cdot_3 1~0~1~2 \cdot_2 2 \cdot_1 0~1~1$ & 47 \\
$c_4 = 3$  & $0~1~2 \cdot_5 {\red 3} \cdot_0 2 \cdot_4 2 \cdot_3 1~0~1~2 \cdot_2 2 \cdot_1 0~1~1$ & 48 \\
$c_5 = 13$ & $0~1~2 \cdot_5 2 \cdot_4 2 \cdot_3 1~0~1~2 \cdot_2 2 \cdot_1 0~1~1 \cdot_6 {\red 2} \cdot_0 $ & 49 \\
$c_6 = 12$ & $0~1~2 \cdot_5 2 \cdot_4 2 \cdot_3 1~0~1~2 \cdot_2 2 \cdot_1 0~1 \cdot_7 {\red 2} \cdot_0 1 \cdot_6$ & 50 \\
$c_7 = 8$  & $0~1~2 \cdot_5 2 \cdot_4 2 \cdot_3 1~0~1 \cdot_8 {\red 2} \cdot_2 2 \cdot_1 2 \cdot_0 0~1 \cdot_7 1 \cdot_6$ & 51
\end{tabular}
\caption{All admissible insertions on a given area sequence.}
\label{fig:admissible-insertions}
\end{figure}

\begin{theorem}
\label{thm:area_dinv}
Let $w$ be an area sequence, we define $\psi(w)$ recursively as follows. If $w = \varepsilon$, then $\psi(w) := \varepsilon$. Otherwise, $w = ua$ where $u$ is a word and $a$ is a letter, and $\psi(w) := \ins_{c_a}(\psi(u))$ where $c_0, c_1, \dots, c_k$ are the admissible insertion positions of $\psi(u)$ taken in admissible order. 

Then $\psi$ defines a bijection on area sequences such that $\dinv(\psi(w)) = \area(w)$. 
\end{theorem}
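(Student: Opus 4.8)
The plan is to prove the theorem by induction on the size $n$ of the area sequence $w$, leaning entirely on Proposition~\ref{prop:admissible-insertions}. The base case $w = \varepsilon$ is trivial: $\psi(\varepsilon) = \varepsilon$, $\dinv(\varepsilon) = 0 = \area(\varepsilon)$. For the inductive step, write $w = ua$ with $u$ of size $n-1$ and $a \in \{0, 1, \dots, n-1\}$ a letter such that $w$ is a valid area sequence; by the characterization proposition this means $0 \le a \le u_{n-1} + 1$. By the induction hypothesis, $\psi(u)$ is a well-defined area sequence with $\dinv(\psi(u)) = \area(u)$.

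The first thing I would need is that the recursive definition actually makes sense, i.e.\ that $c_a$ is a legitimate index: the letter $a$ must not exceed $k$, the largest index in the admissible order of $\psi(u)$. Here I would argue by a secondary induction (or rather by tracking an invariant through the main induction) that \emph{$\psi(u)$ has exactly $u_{n-1} + 2$ admissible insertion positions}, where $u_{n-1}$ is the last letter of $u$ (with the convention that the empty word contributes $-1$, matching the single admissible position $\{0\}$). This invariant follows directly from the last clause of Proposition~\ref{prop:admissible-insertions}: when we form $\psi(u) = \ins_{c_b}(\psi(u'))$ for $u = u'b$, the proposition says the result has $b + 2$ admissible insertion positions. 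So the number of admissible positions of $\psi(u)$ equals (last letter of $u$) $+ 2$, and since $0 \le a \le u_{n-1} + 1$ we have $a \le k = u_{n-1}+1$, so $c_a$ is well defined. This simultaneously handles the edge case where $u$ is empty.

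With well-definedness in hand, the \dinv\ statement is immediate from Equation~\eqref{eq:dinv-insertion}:
\begin{equation*}
\dinv(\psi(w)) = \dinv(\ins_{c_a}(\psi(u))) = \dinv(\psi(u)) + a = \area(u) + a = \area(ua) = \area(w),
\end{equation*}
using the induction hypothesis and the fact that $\area(ua) = \area(u) + a$.

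It remains to prove bijectivity. Since $\psi$ preserves size, it suffices to show $\psi$ restricted to area sequences of each fixed size $n$ is injective (the domain and codomain are finite of the same cardinality, the Catalan number). For injectivity I would again induct on $n$: suppose $\psi(w) = \psi(\tilde w)$ with $w = ua$, $\tilde w = \tilde u \tilde a$. The key observation is that the last clause of Proposition~\ref{prop:admissible-insertions} pins down the inserted letter intrinsically: in $\ins_{c_a}(\psi(u))$ the inserted cell sits at the position $c_a + 1$ which is the leftmost element of the rightmost block of consecutive maximal-value letters, i.e.\ it is $i_0$ of the resulting word. So from $\psi(w)$ alone we can locate and delete the last-inserted letter, recovering $\psi(u)$; moreover the index $a$ is recovered as the position of $c_a$ in the admissible order of $\psi(u)$. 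Hence $\psi(u) = \psi(\tilde u)$ and $a = \tilde a$, and by induction $u = \tilde u$, so $w = \tilde w$. (Equivalently, this shows $\psi$ has an explicit inverse: given an area sequence, repeatedly find $i_0$, read off which admissible position it came from to recover the appended letter, delete it, and recurse.)

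The main obstacle is the bookkeeping in the bijectivity argument: one must be careful that ``delete the letter at position $i_0$'' really is the inverse of an admissible insertion and lands back inside the set of area sequences that $\psi$ can produce — that is, that the deleted word is itself in the image of $\psi$ on size $n-1$ — but this is exactly guaranteed by the ``$i_0$ is the leftmost of the rightmost maximal block'' clause together with the invariant on the number of admissible positions, so no genuinely new idea is required beyond Proposition~\ref{prop:admissible-insertions}.
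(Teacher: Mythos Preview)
Your proof is correct and follows essentially the same approach as the paper: induction via Proposition~\ref{prop:admissible-insertions}, tracking the invariant that $\psi(u)$ has (last letter of $u$)${}+2$ admissible insertion positions for well-definedness, applying~\eqref{eq:dinv-insertion} for the \tdinv{} identity, and using the characterization of the inserted letter as $i_0$ of the result to invert the construction. The only cosmetic difference is that the paper recovers the final letter $a$ as the drop in \tdinv{} upon deletion (equivalent to your ``position in the admissible order'' by~\eqref{eq:dinv-insertion}), and argues reversibility directly rather than injectivity plus equinumerosity.
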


In other words, you read the area sequence from left to right and each letter is giving you an insertion to perform on the image using the admissible insertion positions.

\begin{proof}
This is immediate by induction using Proposition~\ref{prop:admissible-insertions}. Indeed, you first need to check that the map is well defined, \emph{i.e.} that $a \leq k$, $k+1$ being the number of admissible insertion positions of $\psi(u)$. It is true on the initial case: the first of letter of $w$ is $0$ and there is $1$ admissible insertion position on the empty word. Then Proposition~\ref{prop:admissible-insertions} ensures that after inserting at $c_a$, you obtain $a+2$ admissible insertion positions corresponding to the $a+2$ possible letters that could follow $a$ (from $0$ to $a+1$). Then \eqref{eq:dinv-insertion} gives you the expected result for the \tdinv{}. Besides, it is indeed a bijection as the operation is reversible: by Proposition~\ref{prop:admissible-insertions}, the last inserted letter is the leftmost letter in the rightmost block of maximal values. By removing this letter from a word $w'$, the difference in \tdinv{} tells you by which letter $a$ the preimage $w$ of $w'$ ends.  
\end{proof}

Fig.~\ref{fig:example_bijection} shows the step by step computation of the word of Fig.~\ref{fig:dyck_paths_example}. At each step, the admissible insertion positions are indicated with their order on the image $\psi$.

\begin{figure}[ht]
\begin{tabular}{ll}
$0$        & $\cdot_1 0 \cdot_0$ \\
{\red $0$} & \\ \hline
$01$        & $\cdot_2 0 \cdot_1 0 \cdot_0$ \\
{\red $01$} & \\ \hline
$012$        & $ \cdot_3 0 \cdot_2 0 \cdot_1 0 \cdot_0$ \\
{\red $012$} & \\ \hline
$0121$        & $0 ~ 0 \cdot_2 1 \cdot_0 0 \cdot_1$ \\
{\red $0120$} & \\ \hline
$01211$        & $0 ~ 0 ~ 1 \cdot_1 0 \cdot_2 1 \cdot_0$ \\
{\red $01201$} & \\ \hline
      $012111$   & $0 ~ 0 ~ 1 \cdot_2 2 \cdot_0 0 ~ 1 \cdot_1$ \\
{\red $012010$} & \\ \hline
      $0121112$ & $0 ~ 0 ~ 1 \cdot_3 2 \cdot_1 2 \cdot_0 0 ~ 1 \cdot_2$ \\
{\red $0120101$}  & \\ \hline
      $01211123$ & $0 ~ 0 ~ 1 \cdot_4 2 \cdot_2 2 \cdot_1 2 \cdot_0 0 ~ 1 \cdot_3$ \\
{\red $01201012$}  & \\ \hline
      $012111233$ & $0 ~ 0 ~ 1 ~ 2 \cdot_3 2 \cdot_2 2 \cdot_1 0 ~ 1 \cdot_4 2 \cdot_0$ \\
{\red $012010123$}  & \\ \hline
      $0121112330$ & $0 ~ 0 ~ 1 ~ 2 ~ 2 ~ 2 ~ 0 ~ 1 ~ 2 \cdot_1 3 \cdot_0$ \\
{\red $0120101230$}  & \\ \hline
      $01211123301$ & $0 ~ 0 ~ 1 ~ 2 ~ 2 ~ 2 ~ 0 ~ 1 ~ 2 \cdot_2 3 \cdot_1 3 \cdot_0$ \\
{\red $01201012301$}  & \\ \hline
      $012111233011$ & $0 ~ 0 ~ 1 ~ 2 ~ 2 ~ 2 ~ 0 ~ 1 ~ 2 ~ 3 \cdot_2 4 \cdot_0 3 \cdot_1$ \\
{\red $012010123010$}  & \\ \hline
      $0121112330110$ & $0 ~ 0 ~ 1 ~ 2 ~ 2 ~ 2 ~ 0 ~ 1 ~ 2 ~ 3 ~ 4 \cdot_1 5 \cdot_0 3$ \\
{\red $0120101230100$}  & \\ \hline
      $01211123301101$ & $0 ~ 0 ~ 1 ~ 2 ~ 2 ~ 2 ~ 0 ~ 1 ~ 2 ~ 3 ~ 4 \cdot_2 5 \cdot_1 5 \cdot_0 3$ \\
{\red $01201012301001$}  & \\ \hline
      $012111233011012$ & $0 ~ 0 ~ 1 ~ 2 ~ 2 ~ 2 ~ 0 ~ 1 ~ 2 ~ 3 ~ 4 \cdot_3 5 \cdot_2 5 \cdot_1 5 \cdot_0 3$ \\
{\red $012010123010012$}  & \\ \hline
      $0121112330110122$ & $0 ~ 0 ~ 1 ~ 2 ~ 2 ~ 2 ~ 0 ~ 1 ~ 2 ~ 3 ~ 4 ~ 5 \cdot_3 6 \cdot_0 5 \cdot_2 5 \cdot_1 3$ \\
{\red $0120101230100120$}  & \\ \hline
      $01211123301101221$ & $0 ~ 0 ~ 1 ~ 2 ~ 2 ~ 2 ~ 0 ~ 1 ~ 2 ~ 3 ~ 4 ~ 5 ~ 6 \cdot_1 5 ~ 5 \cdot_2 6 \cdot_0 3$ \\
{\red $01201012301001201$}  & \\ \hline
\end{tabular}
\caption{The step by step $\psi$ map on an example. On the left, an area sequence $w$ with its bouncing path underneath. On the right, $\psi(w)$ with its admissible insertion positions.}
\label{fig:example_bijection}
\end{figure}

\subsection{Bounce to area}
\label{subsec:bounce-to-area}

In this section, we explain how the \tbounce{} statistic can be understood through the $\psi$ map. This will also be key to prove that this map is actually $\zeta^{-1}$.

\begin{proposition}
\label{prop:bounce}
Let $w$ be an area sequence of size $n$ and $\psi$ the map defined in Theorem~\ref{thm:area_dinv}. We write $b_1, \dots b_n$ the bounce sequence of $w$ and $\bounces(w)$ the bounces of $w$, \emph{i.e.} the non initial $0$ of the bounce sequence. Remember that the \tbounce{} of $w$ is given by the sum of the reversed position $n - i + 1$ for all $i$ in $\bounces(w)$. We have
\begin{itemize}
\item $b_n = 0$ if and only if $\max(\psi(w)) = \max(\psi(u)) + 1$, where $u$ is the prefix of $w$ of size $n-1$;
\item $|\bounces(w)| = \max(\psi(w))$
\item $|\Maxb(\psi(w))| = b_n +1$;
\item $|\Maxa(\psi(w))| = w_n - b_n$;
\end{itemize}
where $\Maxa$ and $\Maxb$ are taken from Definition~\ref{def:admissible-insertions}. Besides, for each $1 \leq k \leq \max(\psi(w))$, the number of values greater than or equal to $k$ in $\psi(w)$ is given by the reversed position of the $k^{th}$ bounce of the the bounce sequence of $w$.
\end{proposition}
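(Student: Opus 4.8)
The plan is to establish all five items simultaneously by induction on $n = |w|$; the case $n \leq 1$ is immediate, using the convention $\max(\varepsilon) = -1$ so that the first item reads $0 = -1 + 1$. For the inductive step, write $w = ua$ with $a = w_n$, let $b_1 \dots b_n$ denote the bounce sequence of $w$ — so that $b_1 \dots b_{n-1}$ is the bounce sequence of the prefix $u$ — and set $m := \max(\psi(u))$. Recall from Theorem~\ref{thm:area_dinv} that $\psi(w) = \ins_{c_a}(\psi(u))$, where $c_0, \dots, c_k$ are the admissible insertion positions of $\psi(u)$ in admissible order, and that Proposition~\ref{prop:admissible-insertions} (and, crucially, its proof) describes precisely what this insertion does.

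The first move is to identify which of the three cases in the proof of Proposition~\ref{prop:admissible-insertions} applies to the insertion at $c_a$. By the induction hypothesis applied to $u$, we have $|\Maxb(\psi(u))| = b_{n-1} + 1$ and $|\Maxa(\psi(u))| = w_{n-1} - b_{n-1}$, so $\psi(u)$ has exactly $w_{n-1} + 2$ admissible insertion positions, consistent with $0 \leq a \leq w_{n-1} + 1$. Then $a < |\Maxb(\psi(u))|$ — the ``$\Maxb$ case'' — holds exactly when $a \leq b_{n-1}$, and by the recursion defining the bounce sequence this is exactly the condition $b_n = 0$; in the two remaining cases $b_n = b_{n-1} + 1 \neq 0$. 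Since the proof of Proposition~\ref{prop:admissible-insertions} shows the inserted letter to be $m + 1$ — and then the unique maximum of $\psi(w)$ — in the $\Maxb$ case, and $m$ in the other two, this already proves the first item.

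Next I would run through the two regimes and read off the effect of the insertion on $\Maxb$, $\Maxa$ and $\bounces$ from the proof of Proposition~\ref{prop:admissible-insertions}. If $b_n = 0$, then $|\Maxb(\psi(w))| = 1 = b_n + 1$ and $|\Maxa(\psi(w))| = a = w_n - b_n$. If $b_n = b_{n-1} + 1$, then $|\Maxb(\psi(w))| = |\Maxb(\psi(u))| + 1 = b_n + 1$ and $|\Maxa(\psi(w))| = a - |\Maxb(\psi(u))| = w_n - b_n$, where in the third case one uses $a = w_{n-1} + 1$ to rewrite $|\Maxa(\psi(u))|$ as $a - |\Maxb(\psi(u))|$. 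This gives the third and fourth items. For the second, note that $\bounces(w) = \bounces(u) \cup \{n\}$ when $b_n = 0$ and $\bounces(w) = \bounces(u)$ otherwise, so the induction hypothesis $|\bounces(u)| = m$ combined with the first item yields $|\bounces(w)| = \max(\psi(w))$.

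It remains to treat the fifth item. Going from $\psi(u)$ to $\psi(w) = \ins_{c_a}(\psi(u))$ inserts a single copy of a value $v$, with $v = m$ if $b_n \neq 0$ and $v = m + 1$ if $b_n = 0$, the latter being the unique maximum of $\psi(w)$. Hence, for $1 \leq k \leq m$, the number of entries of $\psi(w)$ that are $\geq k$ is one more than the corresponding count for $\psi(u)$; on the other side, the $k$-th bounce of $w$ is at the same position as the $k$-th bounce of $u$ — the bounce positions $\leq n - 1$ are unchanged, and when $b_n = 0$ the new bounce $n$ is the last one — so its reversed position also increases by exactly one, the path having grown by one step. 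When $b_n = 0$ there is in addition the value $k = m + 1$: exactly one entry of $\psi(w)$ is $\geq m + 1$, matching the reversed position $n - n + 1 = 1$ of the new last bounce; invoking the induction hypothesis (fifth item for $u$) then closes the induction. I expect the real difficulty to be organizational rather than computational: the five items must be carried through a single joint induction — the case split uses items three and four for $u$, the first three items are mutually dependent, and item five needs the fine structure of the proof of Proposition~\ref{prop:admissible-insertions}, notably that a newly created bounce is always the last one and that a newly created maximal value occurs as a singleton — after which each individual check is a one-line computation.
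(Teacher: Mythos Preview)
Your proposal is correct and follows essentially the same approach as the paper: a joint induction on $n$ that splits according to which regime of the proof of Proposition~\ref{prop:admissible-insertions} governs the insertion at $c_a$, using the inductive values of $|\Maxb(\psi(u))|$ and $|\Maxa(\psi(u))|$ to translate that split into the dichotomy $b_n = 0$ versus $b_n = b_{n-1}+1$. The only noteworthy difference is that you explicitly separate the $\Maxa$ case from the $i_0-1$ case and observe that $a = w_{n-1}+1$ lets the latter be absorbed into the same formula $|\Maxa(\psi(w))| = a - |\Maxb(\psi(u))|$; the paper treats these two together in a single ``$a \geq |\Maxb(\psi(w))|$'' case without singling this out.
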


For example, look at the image of $w = 01211123301101221$ which can be found on the last line of Fig.~\ref{fig:example_bijection}. As before, we write $b_1 \dots b_n$ its bounce sequence given in red underneath $w$. We have that $b_n > 0$ and the last inserted value is a $6$ which is equal to $\max(\psi(u))$. We have $|\bounces(w)| = 6$ because the bounce sequence has $6$ non initial zeros. $|\Maxb(\psi(w))|$ corresponds to the number of letters equal to $6$ in $\psi(w)$, there are indeed $2 = b_n +1$ of them. And $|\Maxa(\psi(w)|$ is the number of letters equal to $5$ to the right of the last $6$, there are none of them and indeed $w_n - b_n = 0$. Besides, the first bounce of $w$ is $b_4 = 0$, with reversed position $14$, and there are $14$ values greater than or equal to $1$ in $\psi(w)$. The second bounce is $b_6 = 0$ with reversed position $12$ which is the number of values greater than or equal to $2$ in $\psi(w)$. This can be checked for all the other bounces of $w$. 

\begin{proof}
Again, the proof can be made by induction. First note that the area sequence $0$ of size $1$ satisfies all the conditions. In this case, the bounce sequence is also $0$ as well as $\psi(w)$. We have $\bounces(w) = \emptyset$, $\Maxb(\psi(w)) = \lbrace 1 \rbrace$ and $\Maxa(\psi(w)) = \emptyset$.

Now take $w$ an area sequence of size $n > 0$ and suppose that it satisfies the conditions. We prove that the area sequence $wa$ with $0 \leq a \leq w_n+1$ also satisfies it. We use all the previous notations and write $m := \max(\psi(w))$ for convenience.

Let us look at the case where $a < |\Maxb(\psi(w))|$. In particular, this means that $b_n + 1 > a$. By definition of the bounce sequence, this implies $b_{n+1} = 0$. Remember from the proof of Proposition~\ref{prop:admissible-insertions} that if we take $a < |\Maxb(\psi(w))|$, the inserted letter in $\psi(w)$ is going to be $m + 1$. So we have indeed that $b_{n+1} = 0$ and $\max(\psi(wa)) = m + 1$. Besides $\bounces(wa) = \bounces(w) \cup \lbrace n+1 \rbrace$ and so $|\bounces(wa)| = \max(\psi(wa))$. Now as we added a letter $m + 1$, for all $k \leq m$ we have increased by $1$ the number of letters in the image with a value greater than or equal to $k$. As the length of the bounce sequence has increased, the reversed positions of all the elements of $\bounces(w)$ has also been increased by one. And there is a unique value $m + 1$ which corresponds to the reversed position $1$ of $b_{n+1}$. This ensures that the proposition is satisfied. We still need to check that the conditions on $\Maxb$ and $\Maxa$ are still satisfied on $wa$. We have indeed that $|\Maxb(wa)| = 1 = b_{n+1} + 1$. By definition of the map and of the admissible insertion order, the value of $a$ indicates the number of letters equal to $m$ which will end up to the right of the inserted $m+1$ in $\psi(wa)$. So we have $|\Maxa(\psi(wa)| = a = a - b_{n+1}$, because $b_{n+1} = 0$. 

In the case where $a > |\Maxb(\psi(w))|$, the letter inserted in $\psi(w)$ is $m$. On the bounce sequence, we have $a > b_n + 1$ and so $b_{n+1} = b_n + 1 > 0$, and $|\bounces(wa)| = |\bounces(w)| = m = \max(\psi(wa))$. As before, for all $k \leq m$ we have increased by $1$ the number of letters in the image with a value greater than or equal to $k$. As the length of the bounce sequence has increased, the reversed positions of all the elements of $\bounces(wa) = \bounces(w)$ has also been increased by one and the proposition is satisfied. Now $|\Maxb(\psi(wa)| = |\Maxb(\psi(w))| + 1 = b_n + 1 + 1 = b_{n+1} + 1$ as we have added one letter of maximal value $m$. Finally, by definition of the map, we have $a = |\Maxb(w)| + j$ where $j$ is the number of letters equal to $m-1$ to the right of the newly inserted value $m$ in $\psi(wa)$. We obtain $|\Maxa(\psi(wa))| = j = a - |\Maxb(\psi(w))| = a - (b_n + 1) = a - b_{n+1}$.
\end{proof}

\begin{corollary}
Let $w$ be an area sequence and $\psi$ the map defined in Theorem~\ref{thm:area_dinv}. Then
\begin{equation}
\bounce(w) = \area(\psi(w)).
\end{equation}
\end{corollary}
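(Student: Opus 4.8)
The plan is to deduce the identity directly from Proposition~\ref{prop:bounce} by rewriting $\area(\psi(w))$ as a sum over the ``level sets'' of the word $\psi(w)$. First I would record the elementary ``layer cake'' identity: for any word $v = v_1 \dots v_\ell$ on nonnegative integers,
\begin{equation*}
\area(v) = \sum_{i=1}^{\ell} v_i = \sum_{k \geq 1} \bigl| \{\, i : v_i \geq k \,\} \bigr|,
\end{equation*}
since each entry $v_i$ contributes exactly $1$ to the $k$-th summand for every $k \in \{1, \dots, v_i\}$. Applied to $v = \psi(w)$, the right-hand sum is finite, with a nonzero term precisely for $1 \leq k \leq \max(\psi(w))$.

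Next I would feed in the two facts supplied by Proposition~\ref{prop:bounce}. On the one hand, $|\bounces(w)| = \max(\psi(w))$, so the number of bounces of $w$ equals the number of nonempty layers of $\psi(w)$. On the other hand, for each $1 \leq k \leq \max(\psi(w))$, the quantity $\bigl|\{\, i : \psi(w)_i \geq k \,\}\bigr|$ equals the reversed position $n - j + 1$ of the $k$-th bounce $j$ of the bounce sequence of $w$. Substituting both into the layer-cake expansion yields
\begin{equation*}
\area(\psi(w)) = \sum_{k=1}^{\max(\psi(w))} \bigl| \{\, i : \psi(w)_i \geq k \,\} \bigr| = \sum_{j \in \bounces(w)} (n - j + 1) = \bounce(w),
\end{equation*}
the last equality being the definition of $\bounce(w)$ recalled in Section~\ref{sec:area-sequence}.

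Since Proposition~\ref{prop:bounce} is already established, there is essentially no obstacle remaining; the only point deserving a line of care is that the middle reindexing is a genuine bijection, i.e.\ that $k \mapsto (\text{the } k\text{-th bounce of } w)$ is a well-defined bijection from $\{1, \dots, \max(\psi(w))\}$ onto $\bounces(w)$. This is immediate from $|\bounces(w)| = \max(\psi(w))$ together with the fact that the bounces are enumerated in increasing order of position. If one wished to avoid relying on the final assertion of Proposition~\ref{prop:bounce}, the corollary could instead be proved by an induction on $n$ mirroring that proposition's proof, comparing how a single admissible insertion changes $\area(\psi(w))$ with how appending a letter $a$ with $0 \leq a \leq w_n + 1$ changes $\bounce(w)$; but the layer-cake argument is shorter and I would present that one.
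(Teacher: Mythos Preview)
Your argument is correct and is exactly the approach the paper takes: it too simply observes that Proposition~\ref{prop:bounce} makes $\area(\psi(w))$ equal to the sum of the reversed positions of the bounces of $w$, which is $\bounce(w)$. Your write-up just spells out the layer-cake rewriting that the paper leaves implicit.
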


\begin{proof}
This is immediate: the proposition ensures that the sum of the area sequence $\psi(w)$ is the sum of the reversed positions of the bounce sequence of $w$.
\end{proof}

\subsection{The $\zeta$ map}
\label{subsec:zeta}

Haglund describes the $\zeta$ map in~\cite{qtbook}, page 50. In particular, it has the property that for a Dyck path $w$, $\dinv(w) = \area(\zeta(w))$ and $\area(w) = \bounce(\zeta(w))$. We have seen that this is also the case for $\psi^{-1}$ and we can prove that these are the same maps.

\begin{proposition}
The $\psi$ map is the inverse of the classical $\zeta$ map defined by Haglund.
\end{proposition}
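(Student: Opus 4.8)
The plan is to recall Haglund's original definition of the $\zeta$ map (\cite{qtbook}, page~50) and to verify directly that $\psi = \zeta^{-1}$. Since $\psi$ is a bijection on area sequences by Theorem~\ref{thm:area_dinv} and $\zeta$ is a bijection on Dyck paths, it suffices to check that $\zeta \circ \psi$ is the identity, i.e.\ that $\zeta(\psi(w)) = w$ for every area sequence $w$; I would prove this by induction on the size $n$ of $w$, the base case $n = 0$ being trivial.

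For the inductive step, write $w = ua$ with $a = w_n$, so that by definition $\psi(w) = \ins_{c_a}(\psi(u))$, where $c_0, \dots, c_k$ are the admissible insertion positions of $\psi(u)$ in admissible order. By the inductive hypothesis $\zeta(\psi(u)) = u$, so I must show that applying $\zeta$ to $\psi(u)$ with a single letter inserted at position $c_a$ appends exactly the letter $a$ to the area sequence $u$. Here Proposition~\ref{prop:bounce} is the crucial bridge: it converts the data that $\zeta$ reads off a Dyck path --- its diagonal profile and its bounce return points, expressed there through ``the number of letters $\geq k$ in $\psi(w)$ equals the reversed position of the $k$-th bounce of $w$'' --- into the $\Maxb$/$\Maxa$ data of Definition~\ref{def:admissible-insertions} that governs which admissible position $\psi$ uses. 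Using this dictionary together with Proposition~\ref{prop:admissible-insertions} (which locates the newly inserted letter as the leftmost element of the rightmost block of maximal values and records the \tdinv{} increment as $a$) and the identity $\dinv(\psi(w)) = \area(w)$, the verification becomes a bookkeeping matter: the north step that $\zeta$ processes last lands precisely at the $a$-th admissible position, and the corresponding area and bounce updates on both sides agree.

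The hard part will be that Haglund's $\zeta$ is phrased ``globally'', as a single diagonal sweep of the cells below the path, whereas $\psi$ is iterative; so the real work is to extract from that sweep the correct one-step recursion --- identifying which cell or step is swept last and confirming that its image is the $a$-th admissible insertion position of $\psi(u)$ --- or, equivalently, to compare the two complete outputs, first matching bounce structures via Proposition~\ref{prop:bounce} and then pinning down the path inside that structure via the identity $\dinv(\psi(w)) = \area(w)$. A shortcut worth trying first is to invoke a known recursive description of $\zeta^{-1}$ (for instance the one underlying the recursive \texttt{SageMath} implementation, or the reformulations in \cite{zeta_2014, SweepZeta}) and simply check, step by step, that it coincides with the admissible-insertion procedure of Theorem~\ref{thm:area_dinv}; in that case the induction closes immediately and only the translation between conventions remains.
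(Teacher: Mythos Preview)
Your proposal identifies the right ingredients --- in particular that Proposition~\ref{prop:bounce} is the bridge between $\psi$ and Haglund's construction --- but it is structured as an induction on $n$, whereas the paper's argument is a direct global matching of the two constructions. The paper does not try to isolate ``which step $\zeta$ performs last'' and compare it to a single admissible insertion; instead it observes (i) that the bounce decomposition of $\zeta(\pi)$ in Haglund's description coincides with the bounce structure produced by Proposition~\ref{prop:bounce}, and (ii) that inside each bounce block, Haglund determines the path from the \emph{relative order} of the occurrences of two consecutive values $m-1$ and $m$ in the area sequence of $\pi$, and that $\psi$ places the letters $m$ and $m-1$ according to exactly the same rule. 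That second observation is the crux, and it is what your induction would ultimately have to unpack anyway.

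One cautionary remark: in your final paragraph you suggest, as an alternative, ``pinning down the path inside that structure via the identity $\dinv(\psi(w)) = \area(w)$''. That does not work by itself: the pair $(\area,\bounce)$ does not determine a Dyck path uniquely, so matching statistics is not enough to conclude the two maps coincide. You really do need the block-by-block comparison of relative positions of consecutive values, as the paper does. Your other suggested shortcut --- verifying against an existing recursive description of $\zeta^{-1}$ --- is viable but is essentially what the paper's proof is doing, just with Haglund's original description rather than the \texttt{SageMath} one.
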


\begin{proof}
This is a direct consequence of Proposition~\ref{prop:bounce}. Indeed, in his book Haglund describes the image of a path $\pi$ though $\zeta$  by first constructing the bounce path of $\pi$. He gives the bounce steps $\alpha_1, \dots, \alpha_k$. The bounces of the pre-image of $\pi$ through $\psi$ as described in Proposition~\ref{prop:bounce} are just a reformulation of those and follow the same rule. 

Now the path $\zeta(\pi)$ between two peaks of the bounce path is entirely determined by the relative positions of the occurrences of two consecutive values in the area sequence of $\pi$ (first the $0$ and $1$, then the $1$ and $2$, etc.). Similarly, each new $0$ in the bounce path of an area sequence $w$ creates a new maximum $m$ in $\psi(w)$, the values of $w$ before the next bounce will determine the relative placements of the $m$ and $m-1$ values in a way that is similar to Haglund's construction. 
\end{proof}

\section*{Acknowledgments} 

This work has received funding from the PAGCAP Project ANR-21-CE48-0020. I also thank the CNRS and INSMI for the opportunity to spend a year at IRL CRM in Montreal. Thank you to Hugh Thomas and Nathan Williams for looking at the paper before its release.

\bibliographystyle{alpha}
\bibliography{../../biblio/all}

\end{document}